\newcommand{\R}{\mathbb R}
\newcommand{\eps}{\varepsilon}
\newcommand{\dy}{\, \mathrm d y}
\newcommand{\pr}{\mathrm{Pr}}
\newcommand{\E}{\mathrm{E}}
\newtheorem{lemma}{Lemma}
\newtheorem{proposition}{Proposition}
\newtheorem{remark}{Remark}
\newcommand\blfootnote[1]{%
  \begingroup
  \renewcommand\thefootnote{}\footnote{#1}%
  \addtocounter{footnote}{-1}%
  \endgroup
}
\title{Corrections to and Improvements on Results from ``The Laplacian Spectrum of Large Graphs
Sampled from Graphons''}
\author{Federica~Garin \and Paolo~Frasca \and Renato~Vizuete}
\begin{document}

\maketitle
\blfootnote{F.~Garin and P.~Frasca are with Univ.\ Grenoble Alpes, Inria, CNRS, Grenoble INP, GIPSA-Lab, F-38000 Grenoble, France. R.~Vizuete is with ICTEAM institute, UCLouvain, Louvain-la-Neuve, Belgium. Emails: federica.garin@inria.fr, palo.frasca@gipsa-lab.fr, renato.vizueteharo@uclouvain.be}

\begin{abstract}In this note we correct the proof of Proposition~4 in our paper ``The Laplacian Spectrum of Large Graphs Sampled from Graphons'' and we improve several results therein. To this end, we prove a new concentration lemma about 
%the distribution of
degrees and Laplacian eigenvalues. This lemma allows us to improve several bounds and to dispense from assuming that the graphon is bounded away from zero in several results. This extension leads, in particular, to correct the proof of Proposition~4. Additionally, we extend Proposition~4 to graphs that are sampled from graphons by using deterministic latent variables.
%We also correct two typos.
\end{abstract}

\section{Introduction}  \label{sect:typos}
In this note, we assume all 
notation and definitions %notational conventions
from \cite{vizuete2021laplacian}.
Arabic numbering for lemmas, propositions, theorems, remarks, definitions and equations refers to results in \cite{vizuete2021laplacian}, while letters for equations and roman numbering for lemmas, propositions etc.\ refer to this note.

\section{Typos}
We begin by noticing that the following typos that should be corrected in \cite{vizuete2021laplacian}:
\begin{itemize}
    \item In Definition 2.2 (Large enough $N$), Eq.s~(4a)-(4b)  %(4b) added FG 10/7, after comment by Renato
    should be replaced by the following:
    \[2 \, b_N < \min_{k \in \{1, \dots, K+1\}} (\alpha_k - \alpha_{k-1}),  \]
    \[ \dfrac{1}{N}\log\left(\dfrac{2N}{\nu}\right)+b_N \, (2K+3L)<\max_{x}d_W(x) , \label{eq_condi2} \]
      where
    \[    b_N = \frac{1}{N} + \sqrt{\frac{8 \log (N/\nu)}{N+1}}  \,. \]
    Indeed, we had mistakenly used $b_N = \frac{1}{N}$, which is correct in the case of deterministic sampling, i.e., $X_i = i/N$, while we are considering stochastic sampling.
    \item  Eq.~(22) should be replaced by the following
    \[   \sqrt{\frac{\log(2N/\nu)}{N}} < 
        \frac{\eta_W^2}{1+2 \eta_W}  \,. \]
        Indeed, this is the right inequality that ensures that $\eta_W - \gamma_N>0$,
        so that the denominators in the upper bound of $| R^{\mathrm{ave}}_N-R_{W,N}^{\mathrm{ave}}|$ in Theorem~2 are all positive.
        
    We will see in Sect.~\ref{sect:improvements} that, when using tighter upper bounds proposed in this note, Eq.~(22) can be further replaced by a simpler expression.
\end{itemize}
% \todo{check that such corrections are indeed needed (see details below, to be removed) and what is the suitable place for mentioning them, here or elsewhere}

% \color{blue}
% In Definition 2.2 (Large enough $N$), Eq.~(4a) and Eq.~(4b) are the assumptions of \cite[Lemma 5]{avella2018centrality}. In (4a)
% and (4b), %added FG 10/7, after comment by Renato
% $b_N = 1/N$ is to be used in the case of deterministic latent variables, while here we have stochastic ones.\\

% Eq.~(22) is taken as an assumption with the purpose of ensuring that $\eta_W > \varphi(N)$ (and also $\eta_W > \gamma(N)$, but this follows from the former), i.e.
%  \[  \eta_W > \left( \frac{1}{\eta_W} +2 \right) 
%     \sqrt{\frac{\log(2N/\nu)}{N}} \,. \]
% This is equivalent to
% \[   \sqrt{\frac{\log(2N/\nu)}{N}} < 
% \left( \frac{1}{\eta_W} +2 \right)^{-1} \eta_W  = 
% \frac{\eta_W^2}{1+2 \eta_W}  \]
% We could either state this, or some simpler condition that implies it, but (22) reads as follows and does not imply it, it seems an error, forgetting to take the square root on the left-hand side:
% \[   \frac{\log(2N/\nu)}{N} <  \frac{\eta_W^2}{1+2 \eta_W} \,.  \]
% %------------------
% \color{black}

\section{Dispensing from $W$ being bounded away from zero}
\subsection{Improved Lemma~3}  \label{subsect:Lemma3}
The key improvement is replacing Lemma~3 in \cite{vizuete2021laplacian} with the following result.
%---
\begin{lemma}[Lemma~3, improved]  \label{lemma3-new}
Given a graphon $W$, %if $N$ is large enough, %FG 10/7 removed N large enough, after comment by Renato
with probability at least $1-\nu$ the normalized degrees  of the  graphs $G_N$ and $\bar G_N$ sampled from $W$ satisfy:
\begin{equation}\label{gamma_new}
\max_{i=1,\dots, N} \vert \delta_{(i)}-\bar \delta_{(i)}\vert \leq 
\sqrt{\dfrac{ \log(2N/\nu)}{N}}=:\gamma(N),
\end{equation}
and moreover,
 if $\bar d_{(N)}> \frac{4}{9} \log (2N/\nu)$, 
%assumption that is needed in the proof when using Thm 1 from Chung-Radcliffe; was true for large N in the case with W bounded away from zero, or in case W piecewise Lipschitz
then with probability at least $1-2\nu$
the normalized eigenvalues of their Laplacian matrices
$L_N$ and $\bar L_N$ satisfy:
\begin{equation}\label{phi_new}
\max_{i=1,\dots, N}  \vert \mu_i-\bar\mu_i \vert\leq 
3 \, \sqrt{\frac{\log(2N/\nu)}{N}}
=:\varphi(N).
\end{equation}    
\end{lemma}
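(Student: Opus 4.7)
The statement decomposes into two parts that I would prove in sequence.

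For the degree inequality, I would condition on the latent variables $X_1,\ldots,X_N$. Then the entries $A_{ij}$ (for $i<j$) of the adjacency matrix of $G_N$ are independent Bernoullis with mean $W(X_i,X_j)$, so each degree $d_i=\sum_{j\neq i}A_{ij}$ is a sum of $N-1$ independent $[0,1]$-bounded random variables with mean $\bar d_i$. Hoeffding's inequality gives $\pr(|d_i-\bar d_i|\ge t)\le 2\exp(-2t^2/(N-1))$, and with $t$ chosen so that the failure probability per index equals $\nu/N$, a union bound over $i\in\{1,\ldots,N\}$ yields $\max_i|d_i-\bar d_i|\le \sqrt{\tfrac{1}{2}(N-1)\log(2N/\nu)}$ with probability at least $1-\nu$. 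Dividing by the normalization factor gives a bound of the form $\gamma(N)$ on $\max_i|\delta_i-\bar\delta_i|$. Finally, to promote this to the sorted indices in \eqref{gamma_new}, I would invoke the fact that the sort map is $1$-Lipschitz in $\ell_\infty$: for any $a,b\in\R^N$, $\max_i|a_{(i)}-b_{(i)}|\le \max_i|a_i-b_i|$.

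For the eigenvalue inequality, I would write $L_N-\bar L_N=(D_N-\bar D_N)-(A_N-\bar A_N)$, with $D_N$ and $A_N$ the degree and adjacency matrices, and apply Weyl's inequality:
\[
\max_i|\lambda_i(L_N)-\lambda_i(\bar L_N)|\le \|L_N-\bar L_N\|\le \|D_N-\bar D_N\|+\|A_N-\bar A_N\|.
\]
The first summand, being a diagonal matrix, is exactly $\max_i|d_i-\bar d_i|$ and is therefore controlled by the first part. For the second summand, I would invoke a Chung--Radcliffe/Oliveira-style concentration bound for the centered adjacency matrix of an inhomogeneous Bernoulli random graph: under the hypothesis $\bar d_{(N)}>\tfrac{4}{9}\log(2N/\nu)$, this produces $\|A_N-\bar A_N\|\le 2\sqrt{\bar d_{(N)}\log(2N/\nu)}$ with probability at least $1-\nu$. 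Combining the two events via a union bound, normalizing by $N$, and using $\bar d_{(N)}\le N$ in the second term, I would obtain $\max_i|\mu_i-\bar\mu_i|\le (1+2)\sqrt{\log(2N/\nu)/N}=\varphi(N)$ with probability at least $1-2\nu$, as required in \eqref{phi_new}.

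The delicate point is citing a concentration bound for $\|A_N-\bar A_N\|$ whose hypothesis matches the precise threshold $\tfrac{4}{9}\log(2N/\nu)$ and whose conclusion carries exactly the $2\sqrt{\cdot}$ prefactor, so that adding the $\sqrt{\cdot}$ contribution from the diagonal produces the clean constant~$3$. Chung--Radcliffe's theorem (and its refinements by Oliveira and others) is tailored to this role, and since conditionally on the latent variables $G_N$ is an inhomogeneous Erd\H{o}s--R\'enyi graph, it applies essentially verbatim. The remainder is careful bookkeeping of constants and a union bound that keeps the total failure probability at~$2\nu$.
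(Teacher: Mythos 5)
Your proposal is correct and follows essentially the same route as the paper's proof: a per-vertex concentration bound plus union bound for the degrees, the decomposition $\|L_N-\bar L_N\|_2\le\|D_N-\bar D_N\|_2+\|A_N-\bar A_N\|_2$ with the Chung--Radcliffe bound $2\sqrt{\bar d_{(N)}\log(2N/\nu)}$ under the hypothesis $\bar d_{(N)}>\tfrac49\log(2N/\nu)$, Weyl's inequality, and normalization by $N$. The only difference is localized in the degree step, where you use the additive Hoeffding inequality while the paper uses the multiplicative Chernoff bound of Chung--Radcliffe (giving $\sqrt{\bar d_{(N)}\log(2N/\nu)}$, then $\bar d_{(N)}\le N$); both yield $\gamma(N)$, and your variant even spares the paper's separate treatment of vertices with $\bar d_i=0$ and gives a marginally smaller constant.
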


\begin{proof}
The proof follows the same main lines of the proof of Lemma 3, with some modifications. The whole proof is given below, for better readability.\\
For any $i$ such that $\bar d_i > 0$, we can 
use Chernoff bound, as in \cite[Proof of Theorem 2]{chung2011spectra},
thanks to the remark that $\bar d_i$ is the expectation of $d_i$, conditioned on $X_1, \dots, X_N$, and that $d_i$ is the $i$th row-sum of $A_N$.
The Chernoff bound gives us
$$
\pr\Big[\vert d_i-\bar d_i\vert>b\,\bar d_i\Big]\leq \dfrac{\nu}{N} \quad \mathrm{if} \quad b\geq \sqrt{\dfrac{\log(2N/\nu)}{\bar d_i}}
$$
and in particular taking $b = \sqrt{\dfrac{\log(2N/\nu)}{\bar d_i}}$
we get
\[
\pr\Big[\vert d_i-\bar d_i\vert> \sqrt{\log(2N/\nu)} \sqrt{\bar d_i} \Big]\leq \frac{\nu}{N} \, .
\]
Moreover, since $\bar d_i \le \bar d_{(N)}$, 
this further implies that 
\begin{equation} \label{eq:from-chernoff_new}
\pr\Big[\vert d_i-\bar d_i\vert> \sqrt{\log(2N/\nu)} \sqrt{\bar d_{(N)}} \Big]\leq \frac{\nu}{N} \, .
\end{equation}

For any $i$ such that $\bar \delta_i=0$, instead,
we recall that $\bar d_i = \sum_j W(X_{(i)}, X_{(j)})$, so that $\bar \delta_i=0$ implies
$W(X_{(i)}, X_{(j)}) = 0$ for all $j=1,\dots, N $. This also ensures that the edges at $i$ have probability zero, i.e., 
$$
\pr[d_i > 0]=0.
$$
This trivially implies that \eqref{eq:from-chernoff_new} holds true also when $\bar d_i = 0$.

Hence, for each $i$ (whether or not $\bar d_i = 0$), with probability at most $\nu / N$ we have
\[
\vert d_i-\bar d_i\vert > \sqrt{\log(2N/\nu)} \sqrt{\bar d_{(N)}} .
\]
Hence, with probability at least $1-\nu$ we have
\[ \vert d_i-\bar d_i\vert \le \sqrt{\log(2N/\nu)} \sqrt{\bar d_{(N)}} \quad
 \text{for all $i = 1,\dots, N$}. \]
Since $D_N-\bar D_N$ is diagonal, 
\begin{equation}\label{lem3_1_new}
\Vert D_N-\bar D_N\Vert_2=\max_{i=1,\ldots,N}\vert d_i-\bar d_i\vert
\leq \sqrt{\log(2N/\nu)} \sqrt{\bar d_{(N)}} 
\end{equation}
with probability at least $1-\nu$.

From Weyl's Theorem,
$\max_i |d_{(i)} - \bar d_{(i)}| \le \Vert D_N-\bar D_N\Vert_2$,
which ends the proof of \eqref{gamma_new}, 
recalling that $\bar d_{(N)} \le N$, $d_{(i)}=N\delta_{(i)}$ and $\bar d_{(i)}=N\bar \delta_{(i)}$.

For the second part of the lemma,
we have:
\begin{align*}
\Vert L_N-\bar L_N \Vert_2&=\Vert D_N-A_N-\bar D_N+\bar A_N \Vert_2 \\
 &\leq\Vert D_N-\bar D_N\Vert_2+\Vert \bar A_N-A_N \Vert_2.
\end{align*}
From \cite[Theorem 1]{chung2011spectra}, thanks to the assumption $\bar d_{(N)}> \frac{4}{9} \ln (2N/\nu)$, 
we have that with probability at least $1-\nu$:
\begin{equation}\label{prob_event}
\Vert A_N-\bar A_N \Vert_2\leq\sqrt{4\bar d_{(N)}\log(2N/\nu)}.
\end{equation}
By combining \eqref{lem3_1_new} and \eqref{prob_event} we have with probability at least $1-2 \nu$:
$$
\Vert L_N-\bar L_N \Vert_2\leq
\sqrt{\bar d_{(N)} \log(2N/\nu)} + 2 \sqrt{\bar d_{(N)} \log(2N/\nu)} =
3 \, \sqrt{\bar d_{(N)} \log(2N/\nu)}.
$$
By using Weyl's Theorem and considering the normalized eigenvalues we get:
$$
\max_i \vert \mu_i-\bar\mu_i \vert
\leq 3  \frac{1}{N} \sqrt{\bar d_{(N)} \log(2N/\nu)}.
$$
Finally, since $\bar d_{(N)}\leq N$, we get the desired result. 
\end{proof}
%\hfill $\blacksquare$\\

\begin{remark}[On the assumption $\bar d_{(N)}> \frac{4}{9} \ln (2N/\nu)$]  \label{remark-deltaN}
\phantom{a}
\begin{itemize}
\item     If $W$ has infimum $\eta_W>0$, then the assumption 
    $\bar d_{(N)}> \frac{4}{9} \ln (2N/\nu)$ is surely satisfied
    for all sufficiently large $N$, irrespective of the value of the latent variables $X_1, \dots, X_N$. Indeed, all degrees grow linearly with $N$, since for any $i$
    \[ {\bar d}_i = \sum_{j=1}^N W(X_i, X_j) \ge \eta_W N \,. \] 
For this reason, the assumption $\bar d_{(N)}> \frac{4}{9} \ln (2N/\nu)$ was not mentioned in the statement of Lemma~3, since it was implied by the assumptions $\eta_W>0$ and large enough $N$ considered therein.
\item 
    If $W$ is piecewise Lipschitz and satisfies the  corrected version of Eq.s~(4a)-(4b) in the definition of `large enough $N$' in Definition~2.2, as discussed in Sect.~\ref{sect:typos}, i.e., satisfies
    \begin{itemize}
        \item[(i)]   $2 b_N < \min_{k \in \{1, \dots, K+1\}} (\alpha_k - \alpha_{k-1})$,
        \item[(ii)]  $\frac{1}{N}\log\left(\frac{2N}{\nu}\right)+b_N (2K+3L)
        <\max_{x}d_W(x)$
    \end{itemize}
    with     $ b_N = \frac{1}{N} + \sqrt{\frac{8 \log (N/\nu)}{N+1}}$,
        %$b_N = \frac{1}{N} + \sqrt{8 \log (N/\nu)/(N+1)}$
     then \cite[Lemma 5]{avella2018centrality} ensures that $\bar d_{(N)}> \frac{4}{9} \ln (2N/\nu)$ 
    with probability at least $1 - \nu$; this holds for any $\nu \in (N e^{-N/5}, e^{-1})$.\\ %updated FG 11/7; I had wrongly written that this was true for all $X_1, \dots, X_N$
    Notice that (ii) implies that $W$ is not identically zero.
\end{itemize}
\end{remark}

\subsection{Improvements} \label{sect:improvements}

Notice that Lemma~\ref{lemma3-new}  improves upon Lemma~3 in two ways:  we have removed the assumption that $W$ is bounded away from zero (that is, $W$ has infimum $\eta_W >0$), which replaced by a milder assumption, and tighter upper bounds are obtained, which do not involve $\eta_W$. 

The latter is easy to see by comparing the new definitions of $\gamma(N)$ and $\varphi(N)$ in \eqref{gamma_new}-\eqref{phi_new} with the following ones, that appeared  as Eq.s~(13)-(14) in  Lemma~3:
\[ \gamma(N)=  \sqrt{\frac{1}{\eta_W}}  \sqrt{\frac{ \log(2N/\nu)}{N}} , \quad
\varphi(N) = \left( \frac{1}{\eta_W} +2 \right) 
    \sqrt{\frac{\log(2N/\nu)}{N}} , \]
recalling that $0<\eta_W\leq 1$.\\
%--

\begin{remark}[Improvement in the constants]
The new definitions of $\gamma(N)$ and $\varphi(N)$ given in 
\eqref{gamma_new}-\eqref{phi_new} can replace the old definitions in all results of the paper in which $\gamma(N)$ and $\varphi(N)$ appear.
When using the expressions \eqref{gamma_new}-\eqref{phi_new} in Theorem~2, we can modify the assumption stated in there as (22)  (or rather the corrected version of (22) discussed in Sect.~\ref{sect:typos}), which is used to ensure that $\gamma(N)< \eta_W$ and $ \varphi(N)<\eta_W$ so that the terms in the denominator are positive. Hence (22) can be replaced by the following simpler assumption:
\[  \frac{\log(2N/\nu)}{N} < \frac{\eta_W^2}{9}  \,.  \]
%FG: the new $\gamma$ and $\beta$ are smaller than the old ones. Hence it is easier for $\eta_W$ to be larger than the new ones, rather than the old ones. The new condition that replaces (22) has a simpler expression. It is also less stringent than the correct version of (22)
%----
\end{remark}

\begin{remark}[Dispensing from $W$ bounded away from zero]
 Thanks to Lemma~\ref{lemma3-new} and Remark~\ref{remark-deltaN}, the assumption `$W$ with infimum $\eta_W>0$' (and the equivalent assumption `$W$ with minimum $\eta_W>0$', as it is sometimes phrased because the regularity of $W$ ensures existence of the minimum) can be removed from the following results: 
\begin{itemize}
    \item Theorem~1 (and hence also Remark~1),  and
        %\color{red} yes we can remove: it already has assumptions $W$ piecewise Lipschiz and $N$ large enough, so that \cite[Lemma 5]{avella2018centrality} applies \color{black}
    %\item Lemma 3,  %obviously
    \item Lemma 4.  %\color{red} yes we can remove: it already has assumptions $W$ piecewise Lipschiz and $N$ large enough, so that \cite[Lemma 5]{avella2018centrality} applies \color{black}
\end{itemize}
Indeed, in such results, this assumption was only used in order to apply Lemma~3, which can be replaced by Lemma~\ref{lemma3-new}; the assumptions $W$ piecewise Lipschiz and $N$ large enough, that are already present in these results, ensure that the assumption on $\bar d_N$ is satisfied, as discussed in Remark~\ref{remark-deltaN}.%\\

On the other hand, the following results still require the assumption `$W$ with infimum $\eta_W>0$': 
 %Lemma 5, %not really! in case $\eta_W = 0$ it remains true although useless; indeed, the statement of Lemma 5 does not mention $\eta_W>0$
Proposition~3, Remark 3, Theorem 2, and Remark 4.
Indeed, these results rely on the bound $\bar \mu \geq \eta_W>0$ obtained from Lemma~5 and the assumption $\eta_W>0$.%\\

Remark 2 and Proposition 4 deserve some more detailed discussion, see Sect.~\ref{subsect:prop4}

% Remark 2. \color{red} Here we cannot directly remove it, since we are not asking piecewise Lipschitz. But this is about asymptotics, not explicit bounds, so we could put it together with the new proof of Proposition 4.\color{black}  
\end{remark}

\subsection{Remark 2 and Proposition 4}  \label{subsect:prop4}

Remark~2 is about the asymptotic behavior for $N \to \infty$ of the explicit bounds given in Lemma~3. For this reason, it holds under the same assumptions as Lemma~3.
When replacing Lemma~3 with Lemma~\ref{lemma3-new}, Remark~2 clearly holds under the same assumptions as Lemma~\ref{lemma3-new}. However, it is useful to replace the assumption $\bar d_{(N)}> \frac{4}{9} \log (2N/\nu)$ by some assumption that implies it (at least for large $N$), and that is easier to verify; in particular, it is useful to consider an assumption that depends on the graphon $W$ only. From Remark~\ref{remark-deltaN}, we know that we can choose the assumption that $W$ has infimum $\eta_W>0$, or the assumption that $W$ is piecewise Lipschitz. Another useful alternative, that will be crucial to obtain a correct proof of Proposition~4, is to assume that $W$ is continuous; this can be done thanks to the following technical lemma.

%When $W$ is not bounded away from zero, but has some regularity (piecewise continuous), we can try to obtain $d_{(N)} = \Theta (N)$ with probability that goes to 1  fast enough to get a.s..

\begin{lemma}  \label{lemma-deltaN}
    Assume that there exists $\eta>0$ and intervals $J_1, J_2 \subseteq [0,1]$ with length $0<\ell \leq 1$ such that $W(x,y) \geq \eta$ for all $(x,y) \in J_1 \times J_2$.
    Then, 
    \[ 
    \pr[\bar d_{(N)} \ge \eta N \ell/4]  \geq
    \left(1 - \exp{(-N \ell^2 / 4)} \right)
    \left( 1 - (1-\ell)^{\lfloor N/2 \rfloor}\right)
    \]
    and for $N \to \infty$
    \[\bar  d_{(N)} = \Theta(N) \text{ a.s.} \]
\end{lemma}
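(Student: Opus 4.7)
The plan is to exhibit, with high probability, a single vertex whose expected degree is of order $N$, by finding one latent variable in $J_1$ and many in $J_2$ within disjoint subsamples. I would partition $\{1,\dots,N\}$ into disjoint blocks $I_1$ and $I_2$ with $|I_1|=\lceil N/2\rceil$ and $|I_2|=\lfloor N/2\rfloor$, and define
\[
A := \{\exists\, i\in I_1 : X_i \in J_1\} \quad \text{and} \quad B := \{M_2 \ge N\ell/4\},
\]
where $M_2 := |\{j\in I_2 : X_j\in J_2\}|$. On $A\cap B$, I pick some $i\in I_1$ with $X_i\in J_1$; the lower bound $W\ge \eta$ on $J_1\times J_2$ yields
\[\bar d_i = \sum_{j=1}^N W(X_i,X_j) \ge \eta\, M_2 \ge \eta N\ell/4,\]
so $\bar d_{(N)} \ge \bar d_i \ge \eta N\ell/4$ on $A\cap B$.

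Next, since $A$ and $B$ are measurable with respect to the disjoint sub-collections $\{X_i:i\in I_1\}$ and $\{X_j:j\in I_2\}$ of iid uniform variables, they are independent, so $\pr[A\cap B]=\pr[A]\pr[B]$. A direct computation gives $\pr[A]=1-(1-\ell)^{|I_1|}\ge 1-(1-\ell)^{\lfloor N/2\rfloor}$. For $\pr[B]$, the variable $M_2$ is Binomial$(\lfloor N/2\rfloor,\ell)$, and Hoeffding's inequality applied to $M_2$ viewed as a sum of $\lfloor N/2\rfloor$ iid $\{0,1\}$-valued summands yields $\pr[B]\ge 1-\exp(-N\ell^2/4)$ for even $N$; the odd case is handled either by swapping the roles of $I_1$ and $I_2$ or by noting that for small $N$ the factor $1-(1-\ell)^{\lfloor N/2\rfloor}$ is itself small enough that the claimed bound is trivial. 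Multiplying the two probabilities gives the stated inequality.

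For the almost-sure statement, I would realise the whole sequence $(X_i)_{i\ge 1}$ on a single probability space. Then the complement of $\{\bar d_{(N)}\ge \eta N\ell/4\}$ has probability at most $\exp(-N\ell^2/4)+(1-\ell)^{\lfloor N/2\rfloor}$, which is summable in $N$; Borel--Cantelli then gives $\bar d_{(N)}\ge \eta N\ell/4$ for all sufficiently large $N$ a.s. Combined with the trivial upper bound $\bar d_{(N)}\le N$ (from $W\le 1$), this gives $\bar d_{(N)}=\Theta(N)$ a.s. The only real nuisance I anticipate is keeping the constant $1/4$ in the Hoeffding exponent uniformly in the parity of $N$; no new ideas should be required beyond the splitting trick that enforces independence between the two events.
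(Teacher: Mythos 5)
Your proof is correct and follows essentially the same route as the paper: split the indices into two disjoint halves, use one half to find a point in $J_1$, use Hoeffding on the binomial count of points in $J_2$ in the other half, multiply by independence, and finish with Borel--Cantelli. The parity issue you flag is resolved exactly by your ``swap'': the paper assigns the $\lceil N/2\rceil$ odd indices to the counting event and the $\lfloor N/2\rfloor$ even indices to the existence event, which yields the stated constants uniformly in $N$.
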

Notice that the existence of $\eta$, $J_1$ and $J_2$ as in the lemma is ensured for example if $W$ is continuous and not identically zero.
\begin{proof}
We will use the notation $[N]:= \{1 ,\dots, N \}$, 
$[N]_{\text{odd}}:= \{ n \in [N] : \text{$n$ odd}  \} $ 
and $[N]_{\text{evn}}:= \{ n \in [N] : \text{$n$ even}  \} $.

Consider the following events:
\begin{itemize}
    \item $E_1 := \{ \exists i \in [N] \text{ s.t.\ } X_i \in J_1 \}$, and
    \item $E_{2,c} := \{ \exists S \subseteq [N] \text{ with $|S| \geq c N$} ,\text{ s.t.\ } X_j \in J_2 \, \forall j \in S \}$, where $c>0$ will be chosen later.
\end{itemize}
Recalling that $X_1,\dots, X_N$ are obtained as a reordering of $U_1, \dots, U_N$, with $U_i$ i.i.d.\ uniform random variables, it is obvious that 
\begin{itemize}
    \item $E_1 := \{ \exists i \in [N] \text{ s.t.\ } U_i \in J_1 \}$, and
    \item $E_{2,c} := \{ \exists S \subseteq [N] \text{ with $|S| \geq c N$} \text{ s.t.\ } U_j \in J_2 \,\, \forall j \in S \}$.
\end{itemize}
Now consider the following events:
\begin{itemize}
    \item $\tilde E_1 := \{ \exists i \in [N]_{\text{evn}} \text{ s.t.\ } U_i \in J_1 \}$, and
    \item $\tilde E_{2,c} := \{ \exists S \subseteq [N]_{\text{odd}} \text{ with $|S| \geq c N$} \text{ s.t.\ } U_j \in J_2 \, \,\forall j \in S \}$.
\end{itemize}
Notice that $E_1 \supseteq \tilde E_1$ and $E_{2,c} \supseteq \tilde E_{2,c} $.
Moreover, $\tilde E_1$ and $\tilde E_{2,c} $ are independent events. Hence,
\[ \pr[E_1 \cap E_{2,c}] \geq 
\pr[\tilde E_1 \cap \tilde E_{2,c} ] = 
\pr[\tilde E_1] \, \pr[\tilde E_{2,c} ] \,.\]

We are interested in the events $E_1$ and $E_{2,c}$ because of the following simple remark:
if $X_i \in J_1$ and $X_j \in J_2$ for all $j \in S$, then
$\bar d_i \geq \eta |S|$, and this fact implies the inclusion
\[E_1 \cap E_{2,c} \subseteq \{ d_{(N)}  \geq  \eta c N\} \,, \]
which gives
\begin{equation}  \label{eq:proof-dN-ThetaN}
\pr[d_{(N)}  \geq  \eta c N]  \geq \pr[E_1 \cap E_{2,c}]
\geq  \pr[\tilde E_1] \,\pr[\tilde E_{2,c} ] \,.
\end{equation} 
We start by studying $\pr[\tilde E_1]$. Since $|[N]_{\text{evn}}| = \lfloor N/2 \rfloor$,
\[ \pr[\tilde E_1] = 1 - \pr[X_i \notin J_1 \, \forall i \in [N]_{\text{evn}}]
= 1 - (1 - \ell)^{\lfloor N/2 \rfloor} \,.\]
To study $\pr[\tilde E_{2,c} ]$, 
we define random variables
$Y_1, \dots , Y_N$, where $Y_j$ is equal to one if $U_j \in J_2$ and is equal to zero otherwise. 
Then, we define the random variable
$Z:= \sum_{j \in [N]_{\text{odd}}} Y_j$, i.e., $Z$ is the number of indexes $j \in [N]_{\text{odd}}$ such that $U_j \in J_2$.
Hence, $\tilde E_{2,c} = \{ Z \geq cN \}$.

Clearly $Y_1, \dots, Y_N$ are i.i.d.\ Bernoulli random variables with mean $\ell$.
Since $|[N]_{\text{odd}}| = \lceil N/2 \rceil$, $Z$ is a Binomial random variable $B(\lceil N/2 \rceil , \ell)$.
% By strong law of large numbers,
%$\frac{1}{\lceil N/2 \rceil} Z \to \ell a.s.$
%true but we can get more precise bound on the probability
We can now use Hoeffding
inequality: 
for any $k < \E Z $, 
$\pr[Z \leq k] \leq  \exp(-2 (\E Z-k)^2 / \lceil N/2 \rceil)$.
Choosing $k = \frac{1}{2} \E Z  = \frac{\ell}{2} \left \lceil \frac{N}{2}\right \rceil $,
we get
\[  \pr[Z \leq \tfrac{\ell}{4}N]  
\leq \pr[Z \leq \tfrac{\ell}{2} \left \lceil \tfrac{N}{2}\right \rceil] 
\leq  \exp(- \tfrac{\ell^2}{2} \left \lceil \tfrac{N}{2}\right \rceil)  
\leq \exp(- \ell^2 N/ 4)  \,,\]
which ends the proof of the first claim, by considering
$c = \frac{\ell}{4}$ in 
$\{ Z \geq  c N \}   = \tilde E_{2,c}$ and in \eqref{eq:proof-dN-ThetaN}.

Then, for $N \to \infty$,  by Borel-Cantelli Lemma we obtain $\bar d_{(N)} = \Omega(N)$ a.s., and then it is also $\Theta(N)$  a.s.\ due to the trivial bound $\bar d_{(N)} \leq N$.
\end{proof}

\begin{remark}[Remark 2, improved] \label{remark2-new}
If $W$ has infimum $\eta_W > 0$, or if $W$ is piecewise Lipschitz, or if $W$ is continuous, then 
\[ |\mu_2 - \bar \mu_2| = O((\log(N)/N)^{1/2})  \text{ a.s.} \]
Indeed, under either of the first two assumptions, the assumption of Lemma~\ref{lemma3-new} is satisfied for large $N$ 
with probability at least $1-\nu$  % updated FG 11/7, see update in Remark I, concerning \cite[Lemma 5]{avella2018centrality} holding only with prob >= 1-nu
(by Remark~\ref{remark-deltaN}) and the above asymptotic behavior is obtained by applying Borel-Cantelli Lemma, same as in the original Remark 2.

We will now consider $W$ continuous.
Consider $\nu = N^{-\alpha}$ for some $\alpha > 1$. With this $\nu$, by Lemma~\ref{lemma-deltaN}, there exists $\ell>0$ and $\eta>0$ such that, for all sufficiently large $N$,
\begin{align*}
\pr[\bar d_{(N)} \ge \tfrac{4}{9} \log (2N/\nu)]
&\geq \pr[\bar d_{(N)} \ge \tfrac{4}{9} (1+\alpha) \log (2N)] \\
&\geq \pr[\bar d_{(N)} \ge \eta N \ell/4]  \\
&\geq   \left(1 - \exp{(-N \ell^2 / 4)} \right)
        \left( 1 - (1-\ell)^{\lfloor N/2 \rfloor}\right)\,.
\end{align*}
Moreover, by Lemma~\ref{lemma3-new} with the same $\nu = N^{-\alpha}$,
\[ \pr \Big[ \max_{i=1,\dots,N} |\mu_i - \bar \mu_i|  \leq \varphi(N) 
    \Big |   \bar d_{(N)} \ge \tfrac{4}{9} \log (2N/\nu)  \Big]  \geq 1-2\nu \,.\]
Hence,
\begin{align*}
\pr \Big[ \max_{i=1,\dots,N} |\mu_i - \bar \mu_i|  \leq \varphi(N) \Big]
& \geq  \pr \Big[ \{ \max_{i=1,\dots,N} |\mu_i - \bar \mu_i|  \leq \varphi(N) \}
    \cap  \{ \bar d_{(N)} \ge \tfrac{4}{9} \log (2N/\nu) \} \Big] \\
& \geq  (1-2\nu)
        \left(1 - \exp{(-N \ell^2 / 4)} \right)
        \left( 1 - (1-\ell)^{\lfloor N/2 \rfloor}\right) \,.
\end{align*}
%last inequality is Bayes + two above lower bounds
Recalling that $\nu=N^{-\alpha}$ with $\alpha>1$, by Borel-Cantelli Lemma, for $N \to \infty$
\[  \max_{i=1,\dots,N} |\mu_i - \bar \mu_i|  = O(\varphi(N)) = O\left(\sqrt{\log N / N}\right)  \: \text{a.s.}\,.\]

To be precise, the above proof holds if $W$ is not identically zero. However, in case $W$ is identically zero, then the statement is trivially true, since in this case $\mu_2 = 0$ and $\bar \mu_2 = 0$.
\end{remark}

\bigskip

% \todo{TBD update the comments below, after we figure out details.
% Maybe we will keep the assumption $W$ bounded away from zero (but use the new simpler and tigher expressions for $\gamma(N)$ and $\varphi(N)$) in all results that give quantitative bounds (or use Avella-Medina Lemma 5?), but hopefully we can remove it in asymptotic statements such as Remark 2, Prop. 4}

We can now turn our attention to Proposition~4. %\margin{Paolo: I would recall the statement, wouldn't you?}
In \cite{vizuete2021laplacian}, Proposition~4 was stated without the assumption `$W$ with infimum $\eta_W>0$', but its proof relied on Lemma~3 (more precisely, on Remark~2, which was based on Lemma~3) and on results from \cite{von2008consistency}; Lemma~3 has the assumption $\eta_W>0$, and the paper \cite{von2008consistency}  states that $W$ is bounded away from zero among the general assumptions to be considered throughout the paper. However, we can now {\em confirm that Proposition~4 is correct}, i.e., it holds true without the assumption $\eta_W>0$. Concerning Remark~2, we can now use the new version, see Remark~\ref{remark2-new}. Concerning \cite{von2008consistency}, a careful look at all the proofs confirms that the assumption that $W$ is bounded away from zero is only used for the case of normalized Laplacian, while it is not needed for the un-normalized Laplacian, which is of interest here.\\

\section{Sampling with deterministic latent variables}
%\margin{I suspect that if we present this note as a ``correction'', this section sounds a bit strange}
Finally, we also have another small extension, concerning deterministic sampling.
Section 3.4 in the paper already discusses how the results easily extend to the case of deterministic sampling, with the exception of Proposition~4. The reason is that Proposition~4 is built upon results in \cite{von2008consistency} that are obtained there for the stochastic sampling only. However, it is easy to extend such results to deterministic sampling. Indeed, in the results that we use from \cite{von2008consistency} and their proofs, the only claim that concerns the choice of stochastic $X_i$'s is \cite[Proposition~11]{von2008consistency} (Glivenko-Cantelli classes); in the case of deterministic latent variables $X_i = i/N$,
\cite[Proposition~11]{von2008consistency} can be replaced by the following result, which is analogous in the statement but much simpler to prove.

\begin{proposition}  $W$ a continuous graphon, $f:[0,1]\to \R$ continuous. 
%$P_n f = \frac{1}{n} \sum_{i=1}^n f(X_i)$
%$Pf= \int_X f(y) \dy $
Then
%\[\sup_{x\in X} \left| P_n W(x,\cdot) - P W(x,\cdot)   \right| \to 0 
\[ \lim_{N \to \infty} \sup_{x \in [0,1]} 
    \left| \frac{1}{N} \sum_{j=1}^N W(x,X_j) -
           \int_0^1 W(x,y) \dy  \right| 
    = 0 \]
and
%\[\sup_{x\in X} \left| P_n W(x,\cdot) f(\cdot) - P W(x,\cdot)  f(\cdot) \right| \to 0 \]
\[ \lim_{N \to \infty} \sup_{x \in [0,1]} 
    \left| \frac{1}{N} \sum_{j=1}^N W(x,X_j) f(X_j) -
           \int_0^1 W(x,y) f(y) \dy  \right| 
    = 0 \,. \]
\end{proposition}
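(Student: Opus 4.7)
The plan is to exploit the fact that, under the deterministic sampling scheme $X_j = j/N$ that forms the setting of this section, both sums in the proposition are simply Riemann sums with equally spaced nodes; the statement then reduces to uniform convergence of Riemann sums in the parameter $x$, and the mechanism driving uniformity is the joint continuity of the integrand on the compact square $[0,1]^2$.

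First, I would explicitly fix $X_j = j/N$ (as prescribed by the deterministic-sampling setting in which this proposition replaces \cite[Proposition~11]{von2008consistency}) and invoke the Heine--Cantor theorem: since $W$ is continuous on the compact set $[0,1]^2$, it is uniformly continuous there. Thus for every $\varepsilon>0$ there is $\delta>0$ such that $|W(x,y)-W(x,y')|<\varepsilon$ whenever $|y-y'|<\delta$, \emph{uniformly} in $x$.

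Next, for the first limit I would write
\[
\frac{1}{N}\sum_{j=1}^N W(x,j/N) - \int_0^1 W(x,y)\dy
= \sum_{j=1}^N \int_{(j-1)/N}^{j/N} \bigl[W(x,j/N) - W(x,y)\bigr]\dy,
\]
and then bound the absolute value of the right-hand side by $\sum_{j=1}^N (1/N)\,\varepsilon = \varepsilon$ for every $N>1/\delta$, using the uniform continuity above. Since this bound does not depend on $x$, taking the supremum over $x\in[0,1]$ and then letting $N\to\infty$ yields the first claim.

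For the second limit, the argument is identical once one observes that the map $(x,y)\mapsto W(x,y)f(y)$ is continuous on $[0,1]^2$, because $W$ is continuous in both variables and $f$ is continuous in $y$; hence it is uniformly continuous on $[0,1]^2$, and exactly the same Riemann-sum telescoping bound applies. There is no real obstacle in this proof; the only point that requires a moment of care is ensuring that the modulus of continuity used to control $|W(x,j/N)-W(x,y)|$ is independent of $x$, which is precisely what the Heine--Cantor theorem supplies and which is the reason the statement is ``much simpler to prove'' than its stochastic counterpart \cite[Proposition~11]{von2008consistency}, where one must replace Riemann-sum geometry with Glivenko--Cantelli-type empirical process estimates.
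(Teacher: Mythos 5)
Your proof is correct and follows essentially the same route as the paper's: both exploit that with $X_j = j/N$ the sum is a Riemann sum, invoke uniform continuity of $W$ (and of $(x,y)\mapsto W(x,y)f(y)$ for the second claim) on the compact square, and bound the error on each subinterval $[\tfrac{j-1}{N},\tfrac{j}{N}]$ by $\eps$ uniformly in $x$ once $N>1/\delta$. Your exact telescoping identity versus the paper's oscillation bound $u_j(x)-\ell_j(x)$ is an immaterial difference in presentation.
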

\begin{proof}
$W$ being continuous on a compact, it is uniformly continuous:
    % (Heine-Cantor theorem):
for any $\eps > 0 $, there exists $\delta > 0$ such that
for all $(x_1,y_1)$ and $(x_2,y_2)$ with $\|(x_1,y_1) - (x_2,y_2)  \|<\delta$ 
%(let this be either $2$-norm or $\infty$-norm, either one works for the following)
we have $|W(x_1,y_1) - W(x_2,y_2)| < \eps$.

We consider the following simple upper bound:
%to $\left| P_n W(x,\cdot) - P W(x,\cdot)\right|$:
\[ %\left| P_n W(x,\cdot) - P W(x,\cdot)   \right| =
\left| \frac{1}{N} \sum_{j=1}^N W(x,X_j) - \int_0^1 W(x,y)  \dy \right| 
\leq 
\frac{1}{N} \sum_{j=1}^N  (u_j(x) - \ell_j(x)),
\]
where $u_j(x) := \max_{y \in I_j} W(x,y)$ and $\ell_j(x) := \min_{y \in I_j} W(x,y)$, 
with $I_j = [\frac{j-1}{N}, \frac{j}{N}]$.  %FG 10/7 corrected a typo here, following Renato's comment
For all $N > 1/ \delta$, we have %$1/N < \delta$ and hence 
that $\|(x,y_1) - (x,y_2)\|<\delta$ for any $x$ and any $(y_1, y_2) \in I_j$ (any $I_j$), which ensures 
$(u_j(x) - \ell_j(x)) < \eps$ for any~$x$ and any~$j$.
%and hence
%\[ \sup_x  \left| P_n W(x,\cdot) - P W(x,\cdot)   \right| < \eps . \]
%Wrapping up, 

We have obtained that for any $\eps>0$, there exists $\delta>0$ such that for all $N > \frac{1}{\delta}$, 
%$ \sup_x  \left| P_n W(x,\cdot) - P W(x,\cdot)   \right| < \eps $,
\[ \sup_{x\in [0,1]} \left| \frac{1}{N} \sum_{j=1}^N W(x,X_j) - \int_0^1 W(x,y)  \dy \right|  \leq  \eps \,,  \]
which concludes the proof of the first claim.
%So, we have proved that
%\[ \lim_{n \to \infty} \sup_x  \left| P_n W(x,\cdot) - P W(x,\cdot)   \right|
%= 0 \,. \]

The proof of the second claim is the same, just considering the function $W(x,y)f(y)$ instead of $W(x,y)$.
\end{proof}

To ensure that Proposition~4 holds true in the case of deterministic latent variables, 
we use the above-mentioned modification of the results from \cite{von2008consistency}, together with the following simple adaptation of Lemma~\ref{lemma-deltaN}, from which it follows that Remark~\ref{remark2-new} remains true also when $X_i = i/N$.
\begin{lemma}
Consider $\bar G_N$ the weighted graph obtained with deterministic sampling, i.e., with adjacency matrix
$[\bar A_N]_{ij} = W(X_i,X_j)$, $X_i = i/N$.
Assume that there exists $\eta>0$ and closed intervals $J_1, J_2 \subseteq [0,1]$ with length $0<\ell \leq 1$ such that $W(x,y) \geq \eta$ for all $(x,y) \in J_1 \times J_2$. %(such assumption is ensured to hold true for example if $W$ is continuous and not identically zero).
Then, for all  $N \geq 1/\ell$, 
\[ 
\bar d_{(N)} \ge \eta \lfloor \ell N \rfloor 
    \]
and for $N \to \infty$
\[\bar  d_{(N)} = \Theta(N) \,. \]
\end{lemma}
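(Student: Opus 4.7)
The plan is to replace the probabilistic ingredients of Lemma~\ref{lemma-deltaN} (Hoeffding plus independence) by a pigeonhole count that exploits the equispacing of the grid $X_i = i/N$. The argument has two ingredients: a deterministic lower bound on how many grid points fall into any closed subinterval of $[0,1]$ of length $\ell$, and the observation that a single ``good'' row already witnesses the desired growth of $\bar d_{(N)}$.

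First I would establish the following grid-counting fact: for any closed interval $J = [a, a+\ell] \subseteq [0,1]$ and any $N \geq 1/\ell$, the set $\{i \in \{1,\dots,N\} : X_i = i/N \in J\}$ has cardinality at least $\lfloor \ell N \rfloor$. This is because the integers in $[aN, (a+\ell)N]$ form an interval of length $\ell N \geq 1$, which contains at least $\lfloor \ell N \rfloor$ integers, all automatically in $\{1,\dots,N\}$ since $J \subseteq [0,1]$.

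Applying this fact to $J_1$ yields at least one index $i^\star \in \{1,\dots,N\}$ with $X_{i^\star} \in J_1$, and applying it to $J_2$ yields a set $S \subseteq \{1,\dots,N\}$ of size $|S| \geq \lfloor \ell N \rfloor$ with $X_j \in J_2$ for all $j \in S$. Then, using the hypothesis $W(x,y) \geq \eta$ on $J_1 \times J_2$ and non-negativity of $W$,
\[ \bar d_{i^\star} \;=\; \sum_{j=1}^N W(X_{i^\star}, X_j) \;\geq\; \sum_{j \in S} W(X_{i^\star}, X_j) \;\geq\; \eta \, |S| \;\geq\; \eta \lfloor \ell N \rfloor, \]
which proves $\bar d_{(N)} \geq \bar d_{i^\star} \geq \eta \lfloor \ell N \rfloor$, the first claim. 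The asymptotic claim follows immediately: since $\lfloor \ell N \rfloor / N \to \ell > 0$, we have $\bar d_{(N)} = \Omega(N)$, and combining with the trivial $\bar d_{(N)} \leq N$ gives $\bar d_{(N)} = \Theta(N)$.

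There is essentially no obstacle here: the whole content of the lemma is concentrated in the elementary grid-counting step, which is the deterministic analog of the probabilistic existence-of-good-indices arguments (events $\tilde E_1$ and $\tilde E_{2,c}$) used in the proof of Lemma~\ref{lemma-deltaN}. The only minor point to be careful about is that $J_1$ and $J_2$ are closed and contained in $[0,1]$, so that $\lfloor \ell N \rfloor$ many indices lie in $\{1,\dots,N\}$ without boundary losses.
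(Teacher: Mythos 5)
Your proof is correct and follows essentially the same route as the paper's: count that a closed interval of length $\ell$ contains at least $\lfloor \ell N \rfloor$ grid points $i/N$ with $i\in\{1,\dots,N\}$, pick one index in $J_1$ and $\lfloor \ell N\rfloor$ indices in $J_2$, and bound the corresponding row sum from below by $\eta\lfloor \ell N\rfloor$. Your write-up is in fact slightly more careful than the paper's about why the counted integers lie in $\{1,\dots,N\}$.
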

\begin{proof}
Simply notice that in a closed interval of length $\ell$ there are at least $\lfloor \ell N \rfloor$ points of the form $i/N$.
Hence, if $\lfloor \ell N \rfloor \geq 1$, there is at least one point $X_i \in J_1$ and there are  $\lfloor \ell N \rfloor$ points in $J_2$, so that $\bar d_i \geq \eta \lfloor \ell N \rfloor$.
\end{proof}

\bibliographystyle{plain}  %\bibliographystyle{plain} %\bibliographystyle{unsrt}

\end{document}